\documentclass[times]{elsarticle}

\usepackage{amsmath}
\usepackage{amssymb}
\usepackage{amsfonts}
\usepackage{amsthm}
\usepackage{enumerate}

\swapnumbers
\newtheorem{theorem}{Theorem}[section]
\newtheorem{lemma}[theorem]{Lemma}
\newtheorem{corollary}[theorem]{Corollary}
\newdefinition{definition}[theorem]{Definition}
\newdefinition{remark}[theorem]{Remark}

\renewcommand{\a}{\mathfrak{a}}
\renewcommand{\b}{\mathfrak{b}}
\renewcommand{\c}{\mathfrak{c}}
\renewcommand{\d}{\mathfrak{d}}
\newcommand{\e}{\mathfrak{e}}

\renewcommand{\o}{\mathfrak{o}}
\newcommand{\p}{\mathfrak{p}}

\renewcommand{\v}{\mathrm{v}}

\newcommand{\GL}{\operatorname{GL}}
\newcommand{\N}{\mathbb{N}}
\newcommand{\divides}{\mathrel|}

\bibliographystyle{elsarticle-num}

\begin{document}
\title{Determinantal divisors of products of matrices over Dedekind domains}

\author[lama]{Marc Ensenbach}
\ead{Marc.Ensenbach@matha.rwth-aachen.de}
\address[lama]{RWTH Aachen, Lehrstuhl A f\"ur Mathematik, 52056 Aachen, Germany}

\begin{abstract}
Given three lists of ideals of a Dedekind domain, the question is raised whether there exist two matrices $A$ and $B$ with entries in the given Dedekind domain, such that the given lists of ideals are the determinantal divisors of $A$, $B$, and $A B$, respectively. To answer this question, necessary and sufficient conditions are developed in this article.
\end{abstract}

\begin{keyword}
Determinantal divisor \sep Dedekind domain

\MSC 15A15 \sep 15A21 \sep 15A36
\end{keyword}

\maketitle

\section{Introduction}

When dealing with Hecke algebras with respect to a group $G$ and a subgroup $U$, one is interested in the characterisation of $k \in U g U$ and $k \in U g U h U$ for $g, h, k \in G$, since these conditions appear in formulas for the calculation of products in the algebra (\cite{Krieg:Hecke} I\,(4.4)). In the case that $U$ is the unimodular group $\GL_n(\o)$ over a Dedekind domain $\o$, the condition $k \in U g U$ can be characterised by the comparison of determinantal divisors (\cite{Krull:Matrizen} Satz 11). Considering the condition $k \in U g U h U$, one has to deal with determinantal divisors of products of matrices. Since the obtained results are of interest not only in the field of Hecke algebras (see e.\,g. \cite{Flanders:Elementary}), in this text they are developed and presented without any means and notation from Hecke theory; applications to Hecke theory will be published in another article named ``Hecke algebras related to unimodular groups over Dedekind domains''.

\section{Preliminaries and notation}

Denote by $\o$ a Dedekind domain, by $K$ its field of fractions, and by $\o^*$ its group of unities. For every integer $n$ let $I_n$ be the set of ($n \times n$) matrices with entries in $\o$ and non-zero determinant; furthermore, denote by $U_n$ the set of matrices in $I_n$ with determinant in $\o^*$ (in other words $U_n = \GL_n(\o)$ and $I_n = \GL_n(K) \cap \o^{n \times n}$). Following \cite{Steinitz:Rechteckige}, for every $k, n \in \N$ with $k \leq n$ fix an enumeration $(M_{n, k, i})_i$ of the subsets of $\{1, \ldots, n\}$ containing exactly $k$ elements, and for $1 \leq i, j \leq \smash{\binom{n}{k}}$ denote by $A_{(ij)}$ the submatrix of $A$ obtained from an $A \in \o^{n \times n}$ by deleting all but the rows numbered by elements of $M_{n, k, i}$ and all but the columns numbered by elements of $M_{n, k, j}$. Then the $k$-th derivative of $A$ is the matrix $A^{[k]} := (\tilde{a}_{ij})$ with $\tilde{a}_{ij} = \det A_{(ij)}$ for all $1 \leq i, j \leq \smash{\binom{n}{k}}$. With this notation, the $k$-th determinantal divisor of $A$ -- the g.\,c.\,d. (in an ideal theoretic sense) of all entries of $A^{[k]}$ -- is denoted by $\d_k(A)$. For convenience let $\d_k(A) = \o$ for all $k \leq 0$ and $\d_k(A) = \{0\}$ for all $k > n$. Then define the $k$-th elementary divisor $\e_k(A)$ by $\e_k(A) = \d_k(A) \d_{k - 1}(A)^{-1}$, if $\d_k(A) \neq \{0\}$, and $\e_k(A) = \{0\}$ otherwise. (From the general results in \cite{Franz:Elementarteilertheorie} one can conclude that all $\e_k(A)$ are ideals in $\o$.) By these definitions it is possible to exchange determinantal divisors for elementary divisors without loss of information. With this notation, the \emph{rank} of $A$ is biggest $r \in \N_0$ satisfying $\d_r(A) \neq \{0\}$, and the \emph{column class} $\mathfrak{C}(A)$ of $A$ is the ideal class of the g.\,c.\,d. of a nonzero column of $A^{[r]}$ (which is independent of the choice of the column).

In order to give handy formulations of the developed theorems, in this article the triple $((\a_1, \ldots, \a_n), (\b_1, \ldots, \b_n), (\c_1, \ldots, \c_n))$ is called \emph{realisable,} if there exist matrices $A, B \in I_n$ such that $\d_k(A) = \a_k$ and $\d_k(B) = \b_k$ as well as $\d_k(A B) = \c_k$ hold for all $1 \leq k \leq n$.

Before dealing with the existence of products in the next sections, it is sensible to characterise the existence of single matrices with prescribed determinantal divisors or elementary divisors first. One can cite the following slight variation of \cite{Franz:Elementarteilertheorie} Satz 7.

\begin{theorem}\label{texsingle}
Let $n \in \N$ with $n \geq 2$ and $\b_1, \ldots, \b_n$ be ideals in $\o$. Then there exists a matrix $A \in \o^{n \times n}$ satisfying $\e_k(A) = \b_k$ for every $1 \leq k \leq n$, if and only if $\b_k \divides \b_{k + 1}$ for all $1 \leq k < n$ and $\b_1 \cdots \b_n$ is a principal ideal (including $\{0\}$).
\end{theorem}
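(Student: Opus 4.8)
The plan is to prove the two implications separately, pushing everything down to the discrete valuation rings $\o_\p$ via Smith normal form, and supplementing this with the Steinitz structure theory of finitely generated torsion-free $\o$-modules. Throughout one uses that determinantal divisors commute with localisation (their generators are polynomial in the entries) and are unchanged when $A$ is multiplied on either side by a matrix of $\GL_n(\o)$ (Cauchy--Binet, together with the fact that $\det A^{[k]}$ is a power of $\det A$, hence a unit when $A \in \GL_n(\o)$).

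For \emph{necessity}, suppose $A \in \o^{n\times n}$ with $\e_k(A) = \b_k$. Telescoping the defining relation $\e_k(A) = \d_k(A)\,\d_{k-1}(A)^{-1}$ with $\d_0(A) = \o$ gives $\d_k(A) = \b_1 \cdots \b_k$ as long as no $\b_i$ vanishes; in particular $\d_n(A) = \b_1\cdots\b_n$, and this ideal is by definition generated by the single entry $\det A$ of $A^{[n]}$, so it is principal. For the divisibilities, fix a maximal ideal $\p$; over the discrete valuation ring $\o_\p$ the matrix $A$ is $\GL_n(\o_\p)$-equivalent to $\operatorname{diag}(\pi^{m_1},\dots,\pi^{m_n})$ with $0 \le m_1 \le \cdots \le m_n$ (trailing zeros when $\det A = 0$), whence $\d_k(A)_\p = \p^{m_1+\cdots+m_k}$ and therefore $\e_k(A)_\p = \p^{m_k} \divides \p^{m_{k+1}} = \e_{k+1}(A)_\p$; since $\p$ was arbitrary, $\b_k \divides \b_{k+1}$. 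If $\det A = 0$, then $\d_k(A) = \{0\}$ for $k$ beyond the rank $r$, so $\b_1\cdots\b_n = \{0\}$ is principal and the divisibilities involving the zero ideals are automatic.

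For \emph{sufficiency} assume $\b_k \divides \b_{k+1}$ for all $k$ and $\b_1\cdots\b_n$ principal. First suppose all $\b_k \neq \{0\}$. I would form the external direct sum $N := \b_1 \oplus \cdots \oplus \b_n$ and use the Steinitz isomorphism $\mathfrak{x}\oplus\mathfrak{y} \cong \o \oplus \mathfrak{x}\mathfrak{y}$ repeatedly to get $N \cong \o^{n-1} \oplus (\b_1\cdots\b_n) \cong \o^n$, so $N$ is free of rank $n$. Realising $N$ as the submodule $\b_1 e_1 \oplus \cdots \oplus \b_n e_n$ of $\o^n = \o e_1 \oplus \cdots \oplus \o e_n$ and taking an $\o$-basis $v_1,\dots,v_n$ of $N$, let $A$ be the matrix whose columns are the coordinate vectors of the $v_j$ in the basis $e_1,\dots,e_n$; then $A \in \o^{n\times n}$, $\det A \neq 0$, and $A\o^n = N$. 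Localising at $\p$, the columns of $A_\p$ form an $\o_\p$-basis of $N_\p = \p^{m_1}\o_\p e_1 \oplus \cdots \oplus \p^{m_n}\o_\p e_n$ with $m_i := \v_\p(\b_i)$ and $m_1 \le \cdots \le m_n$, so $A_\p = \operatorname{diag}(\pi^{m_1},\dots,\pi^{m_n})\,U$ for some $U \in \GL_n(\o_\p)$ and hence $\d_k(A)_\p = \p^{m_1+\cdots+m_k} = (\b_1\cdots\b_k)_\p$. As this holds at every $\p$, $\d_k(A) = \b_1\cdots\b_k$ and $\e_k(A) = \b_k$ for all $k$.

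It remains to handle the case in which some $\b_k$ vanishes. By the divisibilities the zero ideals form a terminal segment, so there is a largest index $r$ with $\b_r \neq \{0\}$, and then $r < n$. Now $N := \b_1 \oplus \cdots \oplus \b_r$ is torsion-free of rank $r$ and, by the same Steinitz isomorphism, $N \cong \o^{r-1}\oplus(\b_1\cdots\b_r)$ is generated by $r+1 \le n$ elements; I would take $n$ such generators (allowing zeros), view them in $\o^r \subseteq \o^n$ as the columns of a matrix $A$, and repeat the localisation argument to get $\d_k(A) = \b_1\cdots\b_k$ for $k \le r$ and $\d_k(A) = \{0\}$ for $k > r$, hence $\e_k(A) = \b_k$ throughout (with $\e_{r+1}(A) = \{0\}$ because $\d_{r+1}(A) = \{0\}$). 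I expect the main obstacle to be organisational rather than conceptual: the module-theoretic inputs (the Steinitz isomorphism and the resulting generation bounds) are standard, but one must be careful with the boundary case $\b_1\cdots\b_n = \{0\}$ and with checking that the local normal forms really assemble into the claimed global determinantal divisors. Alternatively, the statement follows from \cite{Franz:Elementarteilertheorie} Satz~7 once determinantal divisors are rewritten in terms of elementary divisors and the degenerate case is checked.
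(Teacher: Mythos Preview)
The paper does not prove this statement: it merely records it as ``a slight variation of \cite{Franz:Elementarteilertheorie} Satz~7'' and moves on. Your closing sentence is therefore exactly the paper's entire treatment of the result.

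Your direct argument is correct. Necessity is routine once one knows that determinantal divisors localise. For sufficiency in the nondegenerate case, the Steinitz isomorphism $\mathfrak{x}\oplus\mathfrak{y}\cong\o\oplus\mathfrak{xy}$ indeed makes $N=\b_1\oplus\cdots\oplus\b_n$ free as soon as $\b_1\cdots\b_n$ is principal, and the columns-of-a-basis construction followed by localisation cleanly recovers the determinantal divisors. In the degenerate case your sketch is a touch thin: when the columns of $A$ merely \emph{generate} $N_\p$ rather than form a basis, the factorisation $A_\p=\operatorname{diag}(\pi^{m_i})\,U$ with $U\in\GL_n(\o_\p)$ does not hold on the nose; you need the extra observation that over the PID $\o_\p$ the surjection $\o_\p^n\twoheadrightarrow N_\p$ splits, so after a right $\GL_n(\o_\p)$-multiplication the last $n-r$ columns vanish and the first $r$ do form a basis. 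This is precisely the ``organisational rather than conceptual'' wrinkle you anticipated, and the argument goes through.
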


To end this section of preliminaries, two known auxilliary results that are needed in the rest of this article are stated. The first is part of \cite{Krull:Matrizen} Satz 11.

\begin{theorem}\label{tequaldc}
Let $n \in \N$ and $A, B \in \o^{n \times n}$. Then there exist $P, Q \in U_n$ satisfying $B = P A Q$ if and only if $\d_k(A) = \d_k(B)$ for all $1 \leq k \leq n$ and $\mathfrak{C}(A) = \mathfrak{C}(B)$. (In the case $A, B \in I_n$, the condition $\mathfrak{C}(A) = \mathfrak{C}(B)$ is always true.)
\end{theorem}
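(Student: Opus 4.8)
The plan is to prove the two implications separately: the forward one from the multiplicativity of the derivative, the converse from the classification of submodules of a free module over a Dedekind domain.

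Assume first that $B = PAQ$ with $P, Q \in U_n$. The derivative obeys the Cauchy--Binet identity $(XY)^{[k]} = X^{[k]}Y^{[k]}$, and $\det(U^{[k]})$ is a power of $\det U$, so $U \in U_n$ forces $U^{[k]} \in U_{\binom{n}{k}}$; thus $(PAQ)^{[k]} = P^{[k]}A^{[k]}Q^{[k]}$ with $P^{[k]}$ and $Q^{[k]}$ invertible over $\o$. Since every entry of a product $MXN$ with $M, N \in \o^{m\times m}$ lies in the g.c.d. ideal of the entries of $X$, with equality when $M$ and $N$ are invertible over $\o$, this yields $\d_k(PAQ) = \d_k(A)$ for all $k$, and in particular the ranks coincide. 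For the column class, $A^{[r]}$ has rank one over $K$, so its nonzero columns are $K$-proportional and carry a common ideal class of content; left multiplication by the invertible $P^{[r]}$ does not alter the content of a column, and right multiplication by the invertible $Q^{[r]}$ merely replaces a column by a $K$-multiple of one, so $\mathfrak{C}(PAQ) = \mathfrak{C}(A)$. (When $A, B \in I_n$ one has $r = n$ and $A^{[n]} = (\det A)$ is principal, whence $\mathfrak{C}(A)$ is trivial, which is why the column-class hypothesis is vacuous in that case.)

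For the converse, note that multiplying $A$ on the left and right by elements of $U_n$ amounts to changing bases in the target and source of the $\o$-linear map $A\colon \o^n \to \o^n$; hence the $U_n \times U_n$-equivalence class of $A$ is exactly the isomorphism type of the pair $(\o^n, A\o^n)$, i.e. of the submodule $A\o^n$ of the free module $\o^n$ taken up to automorphisms of $\o^n$. By the Steinitz-type structure theory of modules over a Dedekind domain (\cite{Steinitz:Rechteckige}, \cite{Franz:Elementarteilertheorie}) such a pair has a canonical form, pinned down by a divisibility chain of ideals together with one further ideal class. Passing to the cokernel $\o^n/A\o^n$ and comparing Fitting ideals identifies the chain with the elementary divisors $\e_k(A) = \d_k(A)\d_{k-1}(A)^{-1}$, so that it is encoded in $\d_1(A), \dots, \d_n(A)$; and the remaining ideal class is the column class $\mathfrak{C}(A)$. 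As these data form a complete invariant of the pair, $\d_k(A) = \d_k(B)$ for all $k$ together with $\mathfrak{C}(A) = \mathfrak{C}(B)$ forces $A$ and $B$ to one and the same canonical matrix, say $P_A A Q_A = P_B B Q_B$ with $P_A, Q_A, P_B, Q_B \in U_n$; then $P := P_B^{-1} P_A$ and $Q := Q_A Q_B^{-1}$ lie in $U_n$ and satisfy $B = PAQ$.

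The crux is the converse, and within it the feature that over a Dedekind domain that is not a principal ideal domain one cannot in general reach an honest diagonal (Smith) form: the elementary divisors need not be principal, $A\o^n$ need not be free, and an extra ideal-class invariant is unavoidable --- the same phenomenon as the principality requirement on $\e_1 \cdots \e_n$ in Theorem~\ref{texsingle}. The real work is thus twofold: first, to identify this extra invariant with the column class, which requires relating the content ideal of a nonzero column of $A^{[r]}$ to the ideal class occurring in the canonical form of the pair $(\o^n, A\o^n)$; second, to establish completeness, i.e. that once the determinantal divisors and the column class of $A$ and $B$ agree, no residual obstruction (such as an extension class linking $A\o^n$ to its cokernel) can prevent an isomorphism of pairs. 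Granting the cited structure theory, both reduce to bookkeeping with pseudo-bases.
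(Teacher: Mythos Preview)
The paper does not supply its own proof of this theorem: it is quoted as a preliminary result, attributed to Krull (\cite{Krull:Matrizen} Satz~11). So there is no in-paper argument to compare against; the closest hint is the remark preceding Theorem~\ref{tnormalform} that Krull's proof proceeds by constructing an explicit normal form, from which Satz~11 is read off. Your sketch is in that spirit---reduce to a canonical representative determined by the invariants---and the forward implication via Cauchy--Binet and the multiplicativity of the compound is correct and standard.

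For the converse your outline is broadly right but has one genuine soft spot. You assert that the $U_n\times U_n$-orbit of $A$ is exactly the isomorphism type of the pair $(\o^n, A\o^n)$. One direction is immediate; the other is not: from $P(A\o^n)=B\o^n$ you must still manufacture a $Q\in U_n$ with $B=PAQ$, which amounts to showing that any two surjections $\o^n\twoheadrightarrow B\o^n$ differ by an automorphism of $\o^n$. This uses that $B\o^n$ is projective (so both surjections split) together with cancellation for finitely generated projectives over a Dedekind domain (rank plus Steinitz class) to identify the two kernels. That step deserves a sentence. The remaining identification of the chain of invariant ideals with the $\d_k$ via Fitting ideals, and of the residual ideal class with $\mathfrak{C}(A)$, you correctly flag as the substantive bookkeeping, and it is precisely what Krull's normal-form construction carries out; citing \cite{Krull:Matrizen} (or \cite{Steinitz:Rechteckige}, \cite{Franz:Elementarteilertheorie}) there is appropriate, as the paper itself does.
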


A slight variation of the normal form constructed in the considerations preceding \cite{Krull:Matrizen} Satz 11 directly yields the following theorem.

\begin{theorem}\label{tnormalform}
Let $A \in I_n$ and $A' = (\begin{smallmatrix} A & 0 \\ 0 & 0 \end{smallmatrix}) \in \o^{2 n \times 2 n}$. Then there exist $A_1, \ldots, A_n \in \o^{2 \times 2}$ and $P, Q \in U_{2 n}$, such that $A_k = (\begin{smallmatrix} * & 0 \\ * & 0 \end{smallmatrix})$ and $\d_1(A_k) = \e_k(A)$ for all $1 \leq k \leq n$ and
\[
P A' Q = \begin{pmatrix}
A_1 \\
& \ddots \\
& & A_n
\end{pmatrix}.
\]
\end{theorem}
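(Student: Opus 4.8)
The plan is to manufacture the blocks $A_k$ directly from generators of the elementary divisors of $A$ and then to recognise the resulting block-diagonal matrix as $A'$ up to unimodular equivalence via Theorem~\ref{tequaldc}. Since $A \in I_n$, one has $\det A \neq 0$, hence $\d_n(A) = (\det A) \neq \{0\}$, so $\d_k(A) \neq \{0\}$ and $\e_k(A) \neq \{0\}$ for every $1 \leq k \leq n$. Using that every nonzero ideal of a Dedekind domain is generated by two elements, I would pick $x_k, y_k \in \o$ with $(x_k, y_k) = \e_k(A)$ and set $A_k := (\begin{smallmatrix} x_k & 0 \\ y_k & 0 \end{smallmatrix})$; then $A_k$ has the prescribed shape, $\d_1(A_k) = (x_k, y_k) = \e_k(A)$ and $\d_2(A_k) = (\det A_k) = \{0\}$, so $\e_1(A_k) = \e_k(A)$ and $\e_2(A_k) = \{0\}$. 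Writing $D$ for the block-diagonal matrix with diagonal blocks $A_1, \ldots, A_n$, it then remains to produce $P, Q \in U_{2n}$ with $D = P A' Q$; by Theorem~\ref{tequaldc} this reduces to checking $\d_k(D) = \d_k(A')$ for all $1 \leq k \leq 2n$ and $\mathfrak{C}(D) = \mathfrak{C}(A')$.

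For the determinantal divisors I would argue as follows. From $\d_k(A') = \d_k(A)$ for $k \leq n$ and $\d_k(A') = \{0\}$ for $k > n$, the elementary divisor sequence of $A'$ is $\e_1(A), \ldots, \e_n(A), \{0\}, \ldots, \{0\}$ with $n$ trailing copies of $\{0\}$. On the other hand, the elementary divisors of a block-diagonal matrix are obtained by collecting the elementary divisors of all its blocks and re-sorting them into a divisibility chain; localising at each maximal ideal reduces this to the classical principal ideal domain statement contained in \cite{Franz:Elementarteilertheorie}. Collecting the lists $\e_k(A), \{0\}$ for $k = 1, \ldots, n$ and using $\e_1(A) \divides \cdots \divides \e_n(A)$ (together with the fact that $\{0\}$ is divisible by every ideal) reproduces exactly $\e_1(A), \ldots, \e_n(A), \{0\}, \ldots, \{0\}$, whence $\d_k(D) = \d_k(A')$ for all $k$.

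For the column classes, note that $A'$ and $D$ both have rank $n$. In ${A'}^{[n]}$ the only row index set and the only column index set yielding a nonzero minor is $\{1, \ldots, n\}$, so this derivative has the single nonzero entry $\det A$, and $\mathfrak{C}(A')$ is the class of the principal ideal $(\det A)$, i.e. the trivial class. In $D^{[n]}$ the only column index set avoiding the zero columns of $D$ is $\{1, 3, \ldots, 2n - 1\}$; the entry of the corresponding column indexed by a row set $R$ vanishes unless $R$ meets each block in exactly one row, in which case it equals $\pm \prod_{k=1}^n z_k$ with $z_k \in \{x_k, y_k\}$. Hence the g.\,c.\,d. of that column equals $\prod_{k=1}^n (x_k, y_k) = \prod_{k=1}^n \e_k(A) = \d_n(A) = (\det A)$, so $\mathfrak{C}(D)$ is trivial as well and $\mathfrak{C}(D) = \mathfrak{C}(A')$. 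Theorem~\ref{tequaldc} then supplies $P, Q \in U_{2n}$ with $D = P A' Q$, which is the claim.

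The step I expect to be the main obstacle is the handling of the column classes: since $\det A' = 0$, the matrix $A'$ does not lie in $I_{2n}$, so the convenient special case of Theorem~\ref{tequaldc} is unavailable and the equality $\mathfrak{C}(D) = \mathfrak{C}(A')$ has to be verified directly --- which, as indicated above, amounts to checking that in each case the relevant derivative has a principal g.\,c.\,d. along its single nonzero column. A minor further point is to make sure the ``collect and re-sort'' description of the elementary divisors of a block-diagonal matrix is correctly quoted from the Dedekind-domain elementary divisor theory.
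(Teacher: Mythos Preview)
Your argument is correct, but it follows a genuinely different route from the paper. The paper gives no self-contained proof: it simply remarks that the statement is a slight variation of Krull's explicit normal-form construction preceding \cite{Krull:Matrizen} Satz~11, i.e.\ one obtains the block-diagonal shape by carrying out Krull's reduction procedure on $A'$. You instead manufacture the target block-diagonal matrix $D$ in advance (using the two-generator property of ideals in a Dedekind domain) and then appeal to the equivalence criterion Theorem~\ref{tequaldc} to produce $P,Q$. This is clean and avoids redoing any reduction steps, at the cost of being non-constructive; it also inverts the logical order of Krull's original paper, where the normal form is established first and used to prove the equivalence criterion --- but since the present paper quotes both results independently, there is no circularity in your argument.

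Two small remarks. Your worry about the ``collect and re-sort'' description of the elementary divisors of $D$ is unnecessary here: the direct minor computation you carry out in the column-class paragraph already yields $\d_k(D)=\e_1(A)\cdots\e_k(A)$ for $k\le n$ (take the g.c.d.\ over all $k$-element choices of blocks and use the divisibility chain $\e_1(A)\divides\cdots\divides\e_n(A)$), so you need not invoke any general block-diagonal principle. This is in fact the content of Lemma~\ref{ldiag}, which the paper proves later for exactly such rank-$1$ blocks. Second, your handling of the column classes is exactly the point that needs care, and your computation is right: both $(A')^{[n]}$ and $D^{[n]}$ have a single nonzero column whose g.c.d.\ is the principal ideal $(\det A)=\d_n(A)$, so $\mathfrak{C}(A')=\mathfrak{C}(D)$ is the trivial class and Theorem~\ref{tequaldc} applies.
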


\section{Necessary conditions for realisability}

In this section, some results on determinantal divisors of products of given matrices are shown. These results are then translated into propositions on realisability.

\begin{theorem}\label{texprodneclb}
For every $A, B \in I_n$ one has $\d_k(A) \d_k(B) \divides \d_k(A B)$ for all $1 \leq k \leq n$.
\end{theorem}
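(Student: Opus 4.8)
The plan is to reduce everything to the multiplicativity of the $k$-th derivative under matrix multiplication. First I would establish the Cauchy--Binet type identity $(A B)^{[k]} = A^{[k]} B^{[k]}$ for all $A, B \in \o^{n \times n}$ and all $1 \leq k \leq n$. Writing $C = A B$, the $(i,j)$-entry of $C^{[k]}$ is $\det C_{(ij)}$, i.e. the determinant of the submatrix of $A B$ sitting in the rows indexed by $M_{n,k,i}$ and the columns indexed by $M_{n,k,j}$. Expanding this determinant by the classical Cauchy--Binet formula, the intermediate summation runs precisely over the $k$-element subsets of $\{1, \ldots, n\}$, hence over the enumeration $(M_{n,k,l})_l$ fixed following \cite{Steinitz:Rechteckige}, and yields $\sum_l \det A_{(il)} \det B_{(lj)}$, which is exactly the $(i,j)$-entry of the matrix product $A^{[k]} B^{[k]}$.

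Next I would invoke the definitions directly: since $\d_k(A)$ is the ideal-theoretic g.\,c.\,d. of all entries of $A^{[k]}$, every entry of $A^{[k]}$ lies in $\d_k(A)$, and likewise every entry of $B^{[k]}$ lies in $\d_k(B)$. Therefore every entry of the product $A^{[k]} B^{[k]}$, being a finite sum of terms each of the form (an entry of $A^{[k]}$)$\cdot$(an entry of $B^{[k]}$), lies in the product ideal $\d_k(A) \d_k(B)$.

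Combining the two steps, every entry of $(A B)^{[k]}$ lies in $\d_k(A) \d_k(B)$, so the g.\,c.\,d. $\d_k(A B)$ of these entries is contained in $\d_k(A) \d_k(B)$; by the convention on divisibility of ideals this is exactly the assertion $\d_k(A) \d_k(B) \divides \d_k(A B)$. Note that $A B \in I_n$, so $\d_k(A B) \neq \{0\}$ and the statement is not vacuous.

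The only point demanding genuine care is the bookkeeping in the Cauchy--Binet identity relative to the fixed enumeration of $k$-element subsets: one must check that the single index set $\{1, \ldots, \binom{n}{k}\}$ simultaneously plays the role of the row index of $A^{[k]}$, the column index of $B^{[k]}$, and the summation index in Cauchy--Binet, so that the result is literally the matrix product $A^{[k]} B^{[k]}$ with no re-indexing. This is implicit in the setup taken from \cite{Steinitz:Rechteckige}, so in the final write-up I would either cite it or dispose of it in one line; everything else is an immediate consequence of the definitions of $\d_k$ and of products of ideals.
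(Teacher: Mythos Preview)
Your argument is correct and matches the paper's own proof almost exactly: both reduce via the Cauchy--Binet identity $(AB)^{[k]}=A^{[k]}B^{[k]}$ to the elementary fact $\d_1(C)\d_1(D)\divides\d_1(CD)$, which the paper simply asserts and which you spell out by observing that each entry of $C D$ lies in the product ideal. The only difference is presentational---the paper phrases the reduction as $\d_k(A)=\d_1(A^{[k]})$ and then quotes the $\d_1$-case, while you argue directly on the entries---so there is nothing to change.
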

\begin{proof}
By definition $\d_k(A) = \d_1(A^{[k]})$ holds. Cauchy-Binets formula (\cite{Aitken:Determinants} 39) then yields $\d_k(A B) = \d_1((A B)^{[k]}) = \d_1(A^{[k]} B^{[k]})$, and since $\d_1(C) \d_1(D) \divides \d_1(C D)$ holds for all matrices in $\o^{m \times m}$, one obtains $\d_k(A) \d_k(B) = \d_1(A^{[k]}) \d_1(B^{[k]}) \divides \d_1(A^{[k]} B^{[k]}) = \d_k(A B)$.
\end{proof}

The just proven theorem can be seen as a ``lower bound'' for $\d_k(A B)$. An upper bound, which generalises an unpublished result of Koecher (\cite{Koecher:Matrices} Thm. I.7.1), can also be given.

\begin{theorem}\label{texprodnecub}
For every $A, B \in I_n$ and $1 \leq k \leq n$ one has
\[
\d_k(A B) \divides \d_k(A) \d_n(B) \d_{n - k}(B)^{-1} + \d_k(B) \d_n(A) \d_{n - k}(A)^{-1}.
\]
\end{theorem}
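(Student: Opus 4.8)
The plan is to reduce everything to the case $k=1$ by passing to $k$-th derivatives, exactly as in the proof of Theorem~\ref{texprodneclb}. Writing $A^{[k]}, B^{[k]}$ for the $k$-th derivatives, Cauchy--Binet gives $(AB)^{[k]} = A^{[k]} B^{[k]}$, and the identities $\d_k(A)=\d_1(A^{[k]})$, $\d_k(AB)=\d_1(A^{[k]}B^{[k]})$ hold. The right-hand side of the claim must also be expressed through derivatives: here the key algebraic fact is that for an $(m\times m)$ matrix $C$ of full rank one has $\d_{m-1}(C) = \d_m(C)\,\d_1(\operatorname{adj} C)$, equivalently $\d_m(C)\,\d_{m-1}(C)^{-1} = \d_1(\operatorname{adj} C)^{-1}\,\d_m(C)^{?}$ — more precisely the cofactor matrix $\widetilde C$ satisfies $\widetilde C\, C = (\det C) E$, so $\d_1(\widetilde C) = \d_{m-1}(C)$ and $\d_m(C)\d_{m-1}(C)^{-1}$ is the ideal $\d_m(C)\d_1(\widetilde C)^{-1}$. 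Applying this with $m=\binom{n}{k}$ is awkward, so instead I would apply it with $m=n$: the quantity $\d_n(B)\d_{n-k}(B)^{-1}$ should be recognised as $\d_k(\widetilde B)\cdot(\text{something})$ where $\widetilde B$ is the cofactor (adjugate) matrix of $B$, using the classical relation $\d_{n-k}(B) = \d_n(B)\,\d_k(\widetilde B)\,\d_n(B)^{-1}\cdot\d_n(B) $ — I will need to look up the exact normalisation, but the statement is that the $(n-k)$-th determinantal divisor of $B$ equals $\d_n(B)^{\,k}\d_n(B)^{-1}\cdots$; the clean version is $\d_k(\widetilde B) = \d_n(B)^{\,k-1}\,\d_n(B)\,\d_{n-k}(B)^{-1}$, i.e. $\d_n(B)\,\d_{n-k}(B)^{-1}$ equals $\d_k(\widetilde B)$ up to a power of the principal ideal $\d_n(B)$. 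The safest route avoiding power-of-ideal bookkeeping is: since $\widetilde B B = (\det B)E$, we get $\d_k(\widetilde B)\,\d_k(B)\divides \d_k((\det B)E) = (\det B)^k\o$ from Theorem~\ref{texprodneclb}, which is not yet tight; instead one uses that $\widetilde B$ has the \emph{same} elementary divisors as $B$ but in reversed-and-shifted order, a standard consequence of the Smith normal form over $\o$ (available via Theorem~\ref{tnormalform} or Theorem~\ref{tequaldc} applied to bring $B$ to diagonal form and computing the adjugate of a diagonal matrix directly).

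Concretely, I would first establish the lemma: for $B\in I_n$ with cofactor matrix $\widetilde B$, one has $\e_k(\widetilde B) = \d_n(B)\,\e_{n+1-k}(B)^{-1}$ for $1\le k\le n$, equivalently $\d_k(\widetilde B) = \d_n(B)^{\,k}\,\d_{n-k}(B)^{-1}$. This is proved by diagonalising $B$ over $U_n$ (possible by Theorem~\ref{tequaldc} since for $B\in I_n$ the column class condition is automatic), observing that adjugation transforms $PBQ$ into $(\operatorname{adj}Q)\widetilde B(\operatorname{adj}P)$ with $\operatorname{adj}P,\operatorname{adj}Q\in U_n$, and then reading off the adjugate of a diagonal matrix $\operatorname{diag}(d_1,\dots,d_n)$, whose $(i,i)$ entry is $\prod_{j\ne i} d_j$. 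Then $\d_n(B)\,\d_{n-k}(B)^{-1} = \d_n(B)^{\,1-k}\d_k(\widetilde B)$, and, absorbing the principal-ideal factor, the term $\d_k(A)\d_n(B)\d_{n-k}(B)^{-1}$ becomes $\d_n(B)^{\,1-k}\,\d_k(A)\,\d_k(\widetilde B)$, which by Theorem~\ref{texprodneclb} divides $\d_n(B)^{\,1-k}\,\d_k(A\widetilde B)$. Now $(A\widetilde B)(B) $-wait — the cleaner identity is $A\widetilde B\,\cdot\,\dots$; what I actually want is to compare $\d_k(AB)$ with $\d_k(A\widetilde B)$: note $A\widetilde B\cdot B = (\det B)\,A$, but also I can split $AB$ differently. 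Rather, I will use the two-term bound via the identity relating $AB$, the blocks of $A$ and $\widetilde B$, and a suitable $\binom{n}{k}$-dimensional expansion.

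The heart of the matter — and the step I expect to be the main obstacle — is a Laplace-type expansion showing that every $(k\times k)$ minor of $AB$ lies in the sum of the two ideals on the right. One writes a fixed $k\times k$ minor of $AB$ as a sum over $n$-element index sets using Cauchy--Binet, then, for each such term, depending on whether the chosen column-index set meets a fixed reference set or its complement, one estimates the resulting product of a minor of $A$ and a minor of $B$: one group of terms is divisible by $\d_k(A)\d_{n-k}(B)$ (after using $\d_n(B) = \d_n(B)$ to homogenise), the other by $\d_k(B)\d_{n-k}(A)$, and multiplying through by the principal ideal $\d_n(B)$ resp. $\d_n(A)$ converts $\d_{n-k}(B)^{-1}$-type expressions into honest ideal divisibilities. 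Making this bookkeeping precise — choosing the reference set correctly, handling the sign/permutation factors (which are units), and checking that the principal-ideal normalisations $\d_n(A)$, $\d_n(B)$ land on the correct summand — is where the real work lies; the generalisation of Koecher's $\mathbb Z$-result I.7.1 presumably proceeds by exactly this minor-expansion argument, with the Dedekind-domain adaptation amounting to replacing "$\gcd$ of integers" by "$\gcd$ of ideals" throughout and invoking Theorem~\ref{texprodneclb} for the sub-estimates. I would first carry out the $\mathbb{Z}$ (or localised, i.e. PID) case to fix the combinatorics, then globalise, since determinantal divisors of products can be checked locally at each prime of $\o$ by localising.
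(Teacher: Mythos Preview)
Your closing sentence --- reduce to the PID case and then localise at each prime of $\o$ --- is exactly the paper's overall architecture, and the localisation step works just as you describe. The problem is that everything before that sentence does not add up to a PID proof, and the paper's PID argument is quite different from any of the routes you sketch.

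Concretely: your adjugate approach invokes Theorem~\ref{tequaldc} to ``diagonalise $B$ over $U_n$'', but that theorem only says two matrices are $U_n$-equivalent when their invariants agree; it does not produce a diagonal representative, and over a non-principal Dedekind domain no diagonal matrix with the required elementary divisors need exist (the $\e_k(B)$ are not principal in general). Your Laplace/Cauchy--Binet splitting is also off: the Cauchy--Binet expansion of a $k\times k$ minor of $AB$ runs over $k$-element (not $n$-element) index sets, and every term is a product of a $k$-minor of $A$ with a $k$-minor of $B$, hence lies in $\d_k(A)\d_k(B)$; there is no natural dichotomy on the summation index that would make one block of terms land in $\d_k(A)\d_{n-k}(B)$ and the other in $\d_k(B)\d_{n-k}(A)$. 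So the mechanism that is supposed to produce the \emph{sum} of two ideals on the right-hand side is missing.

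The paper's PID argument is more direct and avoids adjugates entirely. One writes $A=PA'Q_1$ and $B=Q_2B'R$ in Smith normal form, so that $\d_k(AB)=\d_k(A'QB')$ with $Q=Q_1Q_2\in U_n$. Now fix the \emph{row} index set $\{1,\dots,k\}$: because $A'$ is diagonal, every minor of $A'QB'$ with these rows carries the factor $a_1\cdots a_k=\d_k(A)$, and because $B'$ is diagonal, the remaining factor is $\det Q_{(rs)}$ times a product of $k$ of the $b_j$, which always divides $b_{n-k+1}\cdots b_n=\d_n(B)\d_{n-k}(B)^{-1}$. Since $\d_k(AB)$ divides each such minor and the $\det Q_{(rs)}$ have g.c.d.\ a unit (else $\det Q\notin\o^*$), one gets $\d_k(AB)\mid\d_k(A)\,\d_n(B)\d_{n-k}(B)^{-1}$; the symmetric bound follows by fixing a column set instead, and the two together give the divisibility into the sum. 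This row-fixing trick is the idea you are missing.
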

\begin{proof}
First assume that $\o$ is a principal ideal domain. According to the Smith normal form (\cite{Newman:Integral} Thm. II.9) there exist $a_1, \ldots, a_n, b_1, \ldots, b_n \in \o$ and $P, Q_1, Q_2, R \in U_n$ such that
\[
A' = \begin{pmatrix}
a_1 \\
& \ddots \\
& & a_n
\end{pmatrix} \qquad\text{and}\qquad B' = \begin{pmatrix}
b_1 \\
& \ddots \\
& & b_n
\end{pmatrix}
\]
satisfy $A = P A' Q_1$ and $B = Q_2 B' R$. Then
\[
\d_k(A B) = \d_k(P A' Q_1 Q_2 B' R) = \d_k(A' Q_1 Q_2 B')
\]
since determinantal divisors of a matrix are invariant under multiplication with elements of $U_2$ (\cite{Steinitz:Rechteckige} 10). Let $1 \leq r \leq \smash{\binom{n}{k}}$ such that $M_{n, k, r} = \{1, \ldots, k\}$. By the definition of determinantal divisors,
\[
\d_k(A B) = \d_k(A' Q_1 Q_2 B') \divides \det (A' Q_1 Q_2 B')_{(r s)}
\]
holds for all $1 \leq s \leq \smash{\binom{n}{k}}$, and since
\[
\det (A' Q_1 Q_2 B')_{(r s)} = a_1 \cdots a_k \cdot \det (Q_1 Q_2 B')_{(r s)} \divides a_1 \cdots a_n \cdot \det (Q_1 Q_2)_{(r s)} \cdot b_{n - k + 1} \cdots b_n
\]
is true for all $1 \leq s \leq \smash{\binom{n}{k}}$, one obtains
\[
\d_k(A B) \divides a_1 \cdots a_k \cdot b_{n - k + 1} \cdots b_n \cdot \sum_{1 \leq s \leq \binom{n}{k}} \det (Q_1 Q_2)_{(r s)} \o = a_1 \cdots a_k \cdot b_{n - k + 1} \cdots b_n
\]
since the g.\,c.\,d. of the $\det (Q_1 Q_2)_{(r s)} \o$ equals $\o$ (otherwise Leibniz' determinant formula would yield $\det (Q_1 Q_2) \notin \o^*$, which contradicts $Q_1 Q_2 \in U_n$). This shows the relation $\d_k(A B) \divides \d_k(A) \d_n(B) \d_{n - k}(B)^{-1}$, and $\d_k(A B) \divides \d_k(B) \d_n(A) \d_{n - k}(A)^{-1}$ can be proven analogously, which completes the proof of the proposition for the case that $\o$ is a principal ideal domain.

Now consider an arbitrary Dedekind domain $\o$ and the localisation $\o_\p$ of $\o$ at a prime ideal $\p$ of $\o$. Denote by $\a_1, \ldots, \a_n, \b_1, \ldots, \b_n, \c_1, \ldots, \c_n$ the determinantal divisors of $A$ and $B$ as well as $A B$, where the matrices are considered to be elements of $\o_\p^{n \times n}$. Since $\o_\p$ is a principal ideal domain, the already proven part yields $\c_k \divides \a_k \b_n \smash{\b_{n - k}^{-1}} + \b_k \a_n \smash{\a_{n - k}^{-1}}$ and thus $\a_{n - k} \b_{n - k} \c_k \divides \a_{n - k} \a_k \b_n + \b_{n - k} \b_k \a_n$. Using
\[
\v_\p(\a_{n - k} \b_{n - k} \c_k \cap \o) = \v_\p(\a_{n - k} \cap \o) + \v_\p(\b_{n - k} \cap \o) + \v_\p(\c_k \cap \o) = \v_\p(\d_{n - k}(A) \d_{n - k}(B) \d_k(A B))
\]
and the similarly obtained relation
\[
\v_\p((\a_{n - k} \a_k \b_n + \b_{n - k} \b_k \a_n) \cap \o) = \v_\p(\d_{n - k}(A) \d_k(A) \d_n(B) + \d_{n - k}(B) \d_k(B) \d_n(A)),
\]
from $\a_{n - k} \b_{n - k} \c_k \divides \a_{n - k} \a_k \b_n + \b_{n - k} \b_k \a_n$ follows
\[
\v_\p(\d_{n - k}(A) \d_{n - k}(B) \d_k(A B)) \leq \v_\p(\d_{n - k}(A) \d_k(A) \d_n(B) + \d_{n - k}(B) \d_k(B) \d_n(A)),
\]
and since this is valid for every prime ideal $\p$ of $\o$, this yields that $\d_{n - k}(A) \d_{n - k}(B) \d_k(A B)$ divides $\d_{n - k}(A) \d_k(A) \d_n(B) + \d_{n - k}(B) \d_k(B) \d_n(A)$ and thus proves the proposition.
\end{proof}

Now the achieved results are put together.

\begin{corollary}\label{cexprodnec}
The triple $((\a_1, \ldots, \a_n), (\b_1, \ldots, \b_n), (\c_1, \ldots, \c_n))$ is not realisable, if one of the following conditions is violated:
\begin{enumerate}[(1)]
\item $\a_n$, $\b_n$, and $\c_n$ are principal ideals.
\item $\a_1^2 \divides \a_2$ and $\b_1^2 \divides \b_2$ as well as $\c_1^2 \divides \c_2$ hold.
\item $\a_{k-1}^2 \a_{k-2}^{-1} \divides \a_k$ and $\b_{k-1}^2 \b_{k-2}^{-1} \divides \b_k$ as well as $\c_{k-1}^2 \c_{k-2}^{-1} \divides \c_k$ hold for all $3 \leq k \leq n$.
\item $\a_n \b_n = \c_n$.
\item $\a_k \b_k \divides \c_k$ holds for all $1 \leq k < n$.
\item $\c_k \divides \a_k \b_k (\a_n \smash{\a_k^{-1}} \smash{\a_{n - k}^{-1}} + \b_n \smash{\b_k^{-1}} \smash{\b_{n - k}^{-1}})$ holds for all $1 \leq k < n$.
\end{enumerate}
\end{corollary}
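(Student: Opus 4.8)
The plan is to prove the contrapositive in the obvious way: assuming the triple is realisable, fix matrices $A, B \in I_n$ with $\d_k(A) = \a_k$, $\d_k(B) = \b_k$ and $\d_k(A B) = \c_k$ for all $1 \le k \le n$, and then check that conditions (1)--(6) all hold. Since $A$, $B$ and $A B$ lie in $I_n$ they have rank $n$, so every determinantal divisor that appears is a nonzero ideal and all the fractional ideals written as inverses below are well defined; for $n = 1$ only (1) and (4) are non-vacuous, and both follow from the computation of the next paragraph, so I effectively work with $n \ge 2$.

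Conditions (1) and (4) come straight from the determinant: $\a_n = \d_n(A) = (\det A)\,\o$, $\b_n = (\det B)\,\o$ and $\c_n = \d_n(A B) = (\det A \det B)\,\o$ are principal, and multiplicativity of $\det$ gives $\c_n = \a_n \b_n$. For conditions (2) and (3) I would invoke the necessity direction of Theorem~\ref{texsingle}, applied separately to $A$, to $B$ and to $A B$: it yields $\e_k(\cdot) \divides \e_{k+1}(\cdot)$ for $1 \le k < n$ (and, incidentally, that $\e_1 \cdots \e_n$ is principal, which re-proves part of (1)). Substituting $\e_1(A) = \a_1$ and $\e_k(A) = \a_k \a_{k-1}^{-1}$, the relation $\e_1(A) \divides \e_2(A)$ becomes $\a_1^2 \divides \a_2$, and $\e_{k-1}(A) \divides \e_k(A)$ becomes $\a_{k-1}^2 \a_{k-2}^{-1} \divides \a_k$ for $3 \le k \le n$; the identical substitution for $B$ and $A B$ gives the remaining parts of (2) and (3).

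Condition (5) is precisely Theorem~\ref{texprodneclb} applied to $(A, B)$. Condition (6) I would get from Theorem~\ref{texprodnecub}, which gives $\c_k \divides \a_k \b_n \b_{n-k}^{-1} + \b_k \a_n \a_{n-k}^{-1}$; using distributivity of ideal multiplication over addition together with the cancellations $\a_k \a_k^{-1} = \b_k \b_k^{-1} = \o$, the right-hand side is identically equal to $\a_k \b_k\bigl(\a_n \a_k^{-1} \a_{n-k}^{-1} + \b_n \b_k^{-1} \b_{n-k}^{-1}\bigr)$, which is exactly the bound in (6). So the corollary is really just a bundling of Theorems~\ref{texsingle}, \ref{texprodneclb} and \ref{texprodnecub}; the only point that takes a moment of care is this last rewriting — checking that the arithmetic of fractional ideals makes the two expressions literally the same and that every inverse ideal used is legitimate — so I do not expect any substantial obstacle.
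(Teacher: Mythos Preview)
Your proposal is correct and follows essentially the same approach as the paper: it too derives (1)--(3) from Theorem~\ref{texsingle}, (4) from multiplicativity of the determinant, (5) from Theorem~\ref{texprodneclb}, and (6) from Theorem~\ref{texprodnecub}. Your explicit rewriting of the bound in (6) and the check that all inverse ideals are defined are details the paper leaves implicit, but the overall argument is the same.
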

\begin{proof}
(1)--(3) come from \ref{texsingle}, (4) is the multiplicativity of the determinant, (5) comes from \ref{texprodneclb}, and (6) from \ref{texprodnecub}
\end{proof}

\section{Sufficient conditions for realisability}

In this section, it is shown that in certain circumstances the realisability of determinantal divisors of a product can be assured. Therefore first an auxiliary result has to be stated.

\begin{lemma}\label{ldiag}
Let $n \in \N$ and $A_1, \ldots, A_n \in \o^{2 \times 2}$ be matrices of rank $1$ satisfying the divisablity condition $\d_1(A_1) \divides \d_1(A_2) \divides \cdots \divides \d_1(A_n)$. Let
\[
A = \begin{pmatrix}
A_1 \\
& \ddots \\
& & A_n
\end{pmatrix}.
\]
Then $\e_k(A) = \d_1(A_k)$ for all $1 \leq k \leq n$, and the column class of $A$ is the product of the column classes of $A_1, \ldots, A_n$.
\end{lemma}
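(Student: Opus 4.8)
The plan is to reduce the computation of the determinantal divisors of the block-diagonal matrix $A$ to a combinatorial statement about minors, using the block structure to identify exactly which $k\times k$ minors are nonzero. Since each $A_j$ has rank $1$, the block-diagonal matrix $A$ has rank at most $n$; in fact, since each $A_j$ is nonzero, choosing one nonzero entry from each block gives a nonzero diagonal contribution, so $\operatorname{rank} A = n$. The crucial observation is that a $k\times k$ submatrix of $A$ can have nonzero determinant only if it is itself block diagonal with respect to the partition coming from the blocks $A_1,\dots,A_n$: if the chosen row index set and column index set do not ``match up'' block by block, then some block contributes a zero row or zero column to the submatrix (because each $A_j$ has rank $1$, any $2\times 1$ or $1\times 2$ piece that is not a full block can fail to contribute, but more to the point, picking both rows of block $j$ but not both columns, and only one row of block $\ell$, forces a vanishing minor after Laplace expansion along the mismatched blocks). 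So I would first prove: $\det A_{(ij)}\neq 0$ forces $M_{2n,k,i}$ and $M_{2n,k,j}$ to select, for each block, either both of its indices or neither, and the same blocks on the row side as on the column side.

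Granting that, any nonzero $k\times k$ minor of $A$ equals $\prod_{j\in S}\det A_j$ for some $k/2$-element... — no: since each $A_j$ has rank $1$, $\det A_j = 0$, so choosing a full block kills the minor. Hence a nonzero $k\times k$ minor must pick, for each block it touches, exactly one row and one column of that block, i.e. it is (up to sign) a product $\prod_{j\in S} a^{(j)}_{r_j c_j}$ where $S\subseteq\{1,\dots,n\}$ has $|S|=k$ and $a^{(j)}_{r_j c_j}$ is a single entry of $A_j$. Therefore
\[
\d_k(A) \;=\; \sum_{\substack{S\subseteq\{1,\dots,n\}\\ |S|=k}} \;\prod_{j\in S}\d_1(A_j)\;\o.
\]
Now the divisibility hypothesis $\d_1(A_1)\divides\cdots\divides\d_1(A_n)$ makes the term with $S=\{n-k+1,\dots,n\}$ divisible by every other term, so the sum collapses to $\d_k(A)=\d_1(A_{n-k+1})\cdots\d_1(A_n)$. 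Consequently $\e_k(A)=\d_k(A)\d_{k-1}(A)^{-1}=\d_1(A_{n-k+1})\cdots\d_1(A_n)\cdot\bigl(\d_1(A_{n-k+2})\cdots\d_1(A_n)\bigr)^{-1}=\d_1(A_{n-k+1})$, which after reindexing $k\mapsto n-k+1$ — or rather, observing $\e_k$ as defined is the $k$-th from the bottom — I should double-check matches the claim $\e_k(A)=\d_1(A_k)$; if the paper's convention orders elementary divisors so that $\e_k\divides\e_{k+1}$, then indeed $\e_k(A)=\d_1(A_k)$ since $\d_1(A_1)\divides\cdots\divides\d_1(A_n)$, and I will arrange the bookkeeping accordingly.

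For the column class, I work at rank $r=n$. The matrix $A^{[n]}$ has a single row and a single column (as $\binom{2n}{n}$ is not $1$, so more carefully: $A^{[n]}$ is $\binom{2n}{n}\times\binom{2n}{n}$, but its only nonzero entries sit in positions indexed by the ``block-matched'' index sets $M_{2n,n,i}$ that pick exactly one index from each block). Fix the column of $A^{[n]}$ indexed by the set $\{1,3,5,\dots,2n-1\}$ (first index of each block); its nonzero entries are, up to sign, $\prod_{j=1}^n a^{(j)}_{r_j,1}$ as $(r_1,\dots,r_n)$ ranges over $\{1,2\}^n$, i.e. over all ways to pick a row from each block. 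The g.c.d. of these is $\prod_{j=1}^n \d_1^{\mathrm{col}}(A_j)$ where $\d_1^{\mathrm{col}}(A_j)$ is the g.c.d. of the chosen column of $A_j$; up to ideal class this is independent of the choice (column class is well-defined), and the class of a product of g.c.d.s is the product of the classes. Hence $\mathfrak C(A)=\prod_{j=1}^n\mathfrak C(A_j)$.

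The main obstacle I expect is the first combinatorial step — pinning down precisely which $k\times k$ submatrices of the block-diagonal $A$ are nondegenerate, and in particular showing that any admissible (nonzero-determinant) selection must be ``block-diagonal and pick exactly one row and one column from each block it meets.'' This requires a careful Laplace-expansion argument across blocks together with the rank-$1$ hypothesis on each $A_j$: if the row set restricted to block $j$ has size $p_j$ and the column set restricted to block $j$ has size $q_j$, then $\sum p_j=\sum q_j=k$, the determinant factors as a signed sum of products over block-aligned sub-selections only when $p_j=q_j$ for all $j$, and a block with $p_j=q_j=2$ contributes $\det A_j=0$. Once this is nailed down, the rest — the collapsing of the g.c.d. sum under the divisibility chain, the elementary-divisor computation, and the column-class identification — is routine.
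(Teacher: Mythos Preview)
Your approach is exactly what the paper has in mind: its own proof is the single sentence ``This result is obtained from the definitions of column class and elementary divisors by elementary considerations using the special structure of $A$,'' and your plan is a correct elaboration of those elementary considerations. The combinatorial reduction (a nonzero $k\times k$ minor of the block-diagonal $A$ must pick exactly one row and one column from each of $k$ distinct blocks, since $p_j\neq q_j$ forces linear dependence among the block-$j$ rows or columns and $p_j=q_j=2$ contributes the factor $\det A_j=0$) is the right argument, and the column-class computation is also correct.

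There is one genuine slip in the bookkeeping, not merely a convention issue as you suspected. You correctly observe that the term $\prod_{j=n-k+1}^{n}\d_1(A_j)$ is divisible by every other term in the sum $\sum_{|S|=k}\prod_{j\in S}\d_1(A_j)$, i.e.\ it is the \emph{smallest} ideal among the summands. But a sum of ideals collapses to the \emph{largest} summand, not the smallest: if $I\subseteq J$ then $I+J=J$. Hence
\[
\d_k(A)=\sum_{|S|=k}\prod_{j\in S}\d_1(A_j)=\prod_{j=1}^{k}\d_1(A_j),
\]
the product over $S=\{1,\dots,k\}$, and then
\[
\e_k(A)=\d_k(A)\,\d_{k-1}(A)^{-1}=\frac{\d_1(A_1)\cdots\d_1(A_k)}{\d_1(A_1)\cdots\d_1(A_{k-1})}=\d_1(A_k)
\]
directly, with no reindexing needed. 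Once you correct this, your proof goes through cleanly.
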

\begin{proof}
This result is obtained from the definitions of column class and elementary divisors by elementary considerations using the special structure of $A$.
\end{proof}

The next theorem is the main result of this section.

\begin{theorem}\label{texprodsuff}
Let $\a_1, \ldots, \a_n$ and $\b_1, \ldots, \b_n$ be ideals in $\o$ such that there exist matrices $A, B \in I_n$ with $\d_k(A) = \a_k$ and $\d_k(B) = \b_k$ for all $1 \leq k \leq n$. Then one can find matrices $\tilde{A}, \tilde{B} \in I_n$ satisfying $\d_k(\tilde{A}) = \a_k$ and $\d_k(\tilde{B}) = \b_k$ as well as $\d_k(\tilde{A} \tilde{B}) = \a_k \b_k$ for all $1 \leq k \leq n$.
\end{theorem}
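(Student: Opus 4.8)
The plan is to realise $\tilde A$ and $\tilde B$ in a shape that makes the determinantal divisors of the product transparent, by reducing everything to a block-diagonal picture with $2\times 2$ blocks. The tool for this is Theorem~\ref{tnormalform}: apply it to $A$ to obtain $P_1,Q_1\in U_{2n}$ and rank-one blocks $A_1,\dots,A_n\in\o^{2\times 2}$ with $\d_1(A_k)=\e_k(A)$ and $A_k$ of the form $(\begin{smallmatrix}*&0\\ *&0\end{smallmatrix})$, so that $P_1A'Q_1=\operatorname{diag}(A_1,\dots,A_n)$; similarly apply it to $B$ to get $P_2,Q_2$ and blocks $B_1,\dots,B_n$ with $\d_1(B_k)=\e_k(B)$. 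Since $A,B\in I_n$ we have $\a_1\divides\cdots\divides\a_n$ and $\b_1\divides\cdots\divides\b_n$, hence the elementary divisors $\e_k$ satisfy the same divisibility chain $\e_1(A)\divides\cdots\divides\e_n(A)$, and likewise for $B$; this is exactly the hypothesis of Lemma~\ref{ldiag}.

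The key construction is to pair up the blocks correctly. For each $k$, I want a $2\times 2$ matrix $C_k$ with $\d_1(C_k)=\e_k(A)\e_k(B)$ that is realised as a product of a matrix with first determinantal divisor $\e_k(A)$ and one with first determinantal divisor $\e_k(B)$. Because $A_k$ has the column-degenerate form $(\begin{smallmatrix}*&0\\ *&0\end{smallmatrix})$, one can choose $B_k$ (up to multiplication by units in $U_2$, which changes neither $\d_1$ nor, by Theorem~\ref{tequaldc}, the relevant invariants) of the corresponding row-degenerate form $(\begin{smallmatrix}*&*\\ 0&0\end{smallmatrix})$, and then $A_kB_k$ is a rank-one $2\times 2$ matrix with $\d_1(A_kB_k)=\d_1(A_k)\d_1(B_k)=\e_k(A)\e_k(B)$ — the single entry being literally the product of the nonzero entries. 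Collecting these, set $\tilde A=\operatorname{diag}(A_1,\dots,A_n)\in\o^{2n\times 2n}$ and $\tilde B=\operatorname{diag}(B_1,\dots,B_n)$ after the unit adjustment; these have rank $n$, and by Lemma~\ref{ldiag}, $\e_k(\tilde A)=\d_1(A_k)=\e_k(A)=\e_k(A')$ while $\e_k(\tilde A)=\{0\}$ for $n<k\le 2n$, and similarly for $\tilde B$. Since $\tilde A\tilde B=\operatorname{diag}(A_1B_1,\dots,A_nB_n)$, Lemma~\ref{ldiag} applied once more gives $\e_k(\tilde A\tilde B)=\e_k(A)\e_k(B)$ for $1\le k\le n$ and $\{0\}$ beyond. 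Translating elementary divisors back to determinantal divisors yields $\d_k(\tilde A)=\a_k$, $\d_k(\tilde B)=\b_k$, and $\d_k(\tilde A\tilde B)=\a_k\b_k$ for all $1\le k\le n$.

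Two points need care. First, the output matrices are $2n\times 2n$, not $n\times n$; to land in $I_n$ as the statement requires, I would instead use the embedding $A'=(\begin{smallmatrix}A&0\\ 0&0\end{smallmatrix})$ only as a bookkeeping device, and note that since $\tilde A,\tilde B$ constructed this way have rank exactly $n$ with the prescribed determinantal divisors $\a_1,\dots,\a_n$ (and column class forced to be principal because $\a_n=\det$-type is principal), Theorem~\ref{tequaldc} lets me replace them by genuine $n\times n$ matrices in $I_n$ having the same $\d_k$'s — more precisely, by Theorem~\ref{texsingle} there exist $n\times n$ matrices with these elementary divisors, and then a further normal-form comparison transports the product relation. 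Second, I must check the column-class bookkeeping in Lemma~\ref{ldiag}: the column classes of the $A_k$ multiply, but since we ultimately want $n\times n$ matrices in $I_n$ the column class is automatically trivial, so this imposes no constraint and only needs to be verified to be consistent. The main obstacle is precisely this passage between the $2n\times 2n$ block model, where the product is computed effortlessly, and the $n\times n$ world demanded by the statement: one must ensure that the unit adjustments applied to the $B_k$ to make them row-degenerate can be made simultaneously with those coming from the normal form of $B$, i.e. that $P_2,Q_2$ can be chosen compatibly, so that $\tilde B$ really is a single honest matrix rather than a formal assembly. I expect this to come down to observing that multiplying each $B_k$ on either side by an element of $U_2$ amounts to multiplying $\operatorname{diag}(B_1,\dots,B_n)$ by an element of $U_{2n}$, which changes none of the determinantal divisors (by \cite{Steinitz:Rechteckige}~10), so the freedom is genuinely available.
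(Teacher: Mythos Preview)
Your outline follows the paper's strategy---pass to the $2n\times 2n$ normal form of Theorem~\ref{tnormalform}, multiply blockwise, and descend---but two steps do not go through as written.

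First, the claim that each $B_k=(\begin{smallmatrix}*&0\\ *&0\end{smallmatrix})$ can be brought to the row-degenerate form $(\begin{smallmatrix}*&*\\ 0&0\end{smallmatrix})$ by multiplying with elements of $U_2$ is false over a general Dedekind domain. A rank-one matrix of the shape $(\begin{smallmatrix}*&*\\ 0&0\end{smallmatrix})$ has each nonzero column generated by a single element, hence has \emph{principal} column class; but $B_k$ has column class equal to the class of $\e_k(B)$, which need not be principal. By Theorem~\ref{tequaldc} these two rank-one matrices are then not $U_2$-equivalent. The paper sidesteps this by using the transpose: it sets $C^*=A^*(B^*)^{\mathrm T}$, so that the blocks of $(B^*)^{\mathrm T}$ are automatically $(\begin{smallmatrix}*&*\\ 0&0\end{smallmatrix})$, and later absorbs the resulting $B^{\mathrm T}$ via $U_nB^{\mathrm T}U_n=U_nBU_n$ (again from Theorem~\ref{tequaldc}, since $B\in I_n$).

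Second---and this is the step you yourself flag as the main obstacle---``a further normal-form comparison transports the product relation'' does not work: replacing $\tilde A,\tilde B$ individually by $U$-equivalent matrices (let alone by matrices of a different size) does not control $\d_k$ of the product; indeed that is precisely the content of the theorem. The paper's mechanism is concrete: having shown via Lemma~\ref{ldiag} and Theorem~\ref{tequaldc} that $C'=(\begin{smallmatrix}C&0\\0&0\end{smallmatrix})=P\,C^*\,Q$ for some $P,Q\in U_{2n}$ and some $C\in I_n$ with $\d_k(C)=\a_k\b_k$, it substitutes $C^*=A^*(B^*)^{\mathrm T}$, $A^*=P_1A'Q_1$, $B^*=P_2B'Q_2$, $A'=E'A\tilde E$, $B'=E'B\tilde E$, and then reads off $C=\tilde E C'E'=R_1\,A\,R_2\,B^{\mathrm T}\,R_3$ with $R_i\in\o^{n\times n}$. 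Comparing determinants (using $\d_n(C)=\a_n\b_n$) forces $\det R_1\det R_2\det R_3\in\o^*$, hence each $R_i\in U_n$, so $C\in U_nAU_nBU_n$ and one may take $\tilde A\in U_nAU_n$, $\tilde B\in BU_n$ with $\tilde A\tilde B=C$. This explicit unwinding through the embeddings $E',\tilde E$ is the missing ingredient in your descent.
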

\begin{proof}
Let 
\[
E' = \begin{pmatrix}
E_n \\
0
\end{pmatrix} \in \o^{2n \times n} \qquad\text{and}\qquad \tilde{E} = \begin{pmatrix}
E_n \quad 0
\end{pmatrix} \in \o^{n \times 2n}
\]
where $E_n$ denotes the ($n \times n$) identity matrix, and let $A' = E' A \tilde{E} = (\begin{smallmatrix} A & 0 \\ 0 & 0 \end{smallmatrix})$ as well as $B' = E' B \tilde{E} = (\begin{smallmatrix} B & 0 \\ 0 & 0 \end{smallmatrix})$. Denote by $A^*$ and $B^*$ the normal forms in the sense of \ref{tnormalform} of $A'$ and $B'$, respectively, and let $C^* = A^* (B^*)^\text{T}$ (where $(B^*)^\text{T}$ denotes the transpose of $B^*$). Furthermore, choose a matrix $C \in \o^{n \times n}$ satisfying $\e_k(C) = \e_k(A) \e_k(B)$ and thus $\d_k(C) = \d_k(A) \d_k(B)$ for all $1 \leq k \leq n$ according to \ref{texsingle}.

Let $C' = E' C \tilde{E} = (\begin{smallmatrix} C & 0 \\ 0 & 0 \end{smallmatrix})$ and show (using \ref{tequaldc}) that there exist $P, Q \in U_{2 n}$ satisfying $C' = P C^* Q$. By definition, $C^*$ is a block diagonal matrix, where the ($2 \times 2$) blocks $C_1, \ldots, C_n$ on the diagonal are of rank $1$ and satisfy $\d_1(C_k) = \e_k(A) \e_k(B)$ as well as $\mathfrak{C}(C_k) = \d_1(A_k) \mathfrak{H}$ for all $1 \leq k \leq n$, where $\mathfrak{H}$ denotes the principal ideal class of $K$. Since $\d_1(C_{k - 1}) = \e_{k - 1}(A) \e_{k - 1}(B) \divides \e_k(A) \e_k(B) = \d_1(C_k)$ holds for all $1 < k \leq n$, \ref{ldiag} implies $\e_k(C^*) = \e_k(A) \e_k(B) = \e_k(C) = \e_k(C')$ for all $1 \leq k \leq n$ and
\[
\mathfrak{C}(C^*) = \d_1(C_1) \cdots \d_1(C_n) \mathfrak{H} = \e_1(C) \cdots \e_n(C) \mathfrak{H} = \d_n(C) \mathfrak{H} = \mathfrak{H} \overset{C \in I_n}{=} \mathfrak{C}(C) = \mathfrak{C}(C').
\]
Since furthermore $\e_k(C^*) = \{0\} = \e_k(C')$ holds for all $n < k \leq 2 n$, \ref{tequaldc} implies the existence of $P, Q \in U_{2 n}$ such that $C' = P C^* Q$.

According to 2.3 there exist $P_1, Q_1, P_2, Q_2 \in U_{2 n}$ such that $A^* = P_1 A' Q_1$ and $B^* = P_2 B' Q_2$. Using $C = \tilde{E} C' E'$ as well as the definitions of $A'$ and $B'$, one obtains
\[
C = \underbrace{\tilde{E} P P_1 E'}_{=: R_1} A \underbrace{\tilde{E} Q_1 Q_2^\text{T} \tilde{E}^\text{T}}_{=: R_2} B^\text{T} \underbrace{(E')^\text{T} P_2^\text{T} Q E'}_{=: R_3}
\]
with $R_1, R_2, R_3 \in U_n$. Thus, $C \in U_n A U_n B^\text{T} U_n$ follows, and since $U_n B^\text{T} U_n = U_n B U_n$ (deducible from \ref{tequaldc}), one has $C \in U_n A U_n B U_n$ and thus can find $\tilde{A} \in U_n A U_n$ and $\tilde{B} \in B U_n$ satisfying $\tilde{A} \tilde{B} = C$. Since \cite{Steinitz:Rechteckige} 10 implies $\d_k(\tilde{A}) = \d_k(A)$ and $\d_k(\tilde{B}) = \d_k(B)$ for all $1 \leq k \leq n$, the already proven assertion $\d_k(C) = \d_k(A) \d_k(B)$ for all $1 \leq k \leq n$ yields the desired result.
\end{proof}

The translation of the last theorem into the language of realisability yields the following corollary.

\begin{corollary}
The triple $((\a_1, \ldots, \a_n), (\b_1, \ldots, \b_n), (\c_1, \ldots, \c_n))$ is realisable, if the conditions (1)--(3) of \ref{cexprodnec} are satisfied and $\c_k = \a_k \b_k$ holds for all $1 \leq k \leq n$.
\end{corollary}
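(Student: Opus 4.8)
The plan is to obtain this corollary by combining Theorems \ref{texsingle} and \ref{texprodsuff}, the bridge being the usual translation between determinantal and elementary divisors. First I would restrict conditions (1)--(3) of \ref{cexprodnec} to the single list $(\a_1, \ldots, \a_n)$ and rewrite them: with the convention $\a_0 := \o$, set $\e_k := \a_k \a_{k-1}^{-1}$ for $1 \le k \le n$. A telescoping product gives $\e_1 \cdots \e_n = \a_n$, so that ``$\a_n$ is a principal ideal'' is the same as ``$\e_1 \cdots \e_n$ is principal''; and for $1 \le k < n$ the relation $\e_k \divides \e_{k+1}$ is equivalent to $\a_k^2 \a_{k-1}^{-1} \divides \a_{k+1}$, which for $k = 1$ is condition (2) and for $2 \le k \le n-1$ reproduces condition (3) (with index $k+1$). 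A short induction using (2) and (3) further shows $\a_{k-1} \divides \a_k$ for all $k$, so that each $\e_k$ is an (integral) ideal of $\o$. Hence $(\e_1, \ldots, \e_n)$ satisfies the hypotheses of Theorem \ref{texsingle}, which yields $A \in \o^{n \times n}$ with $\e_k(A) = \e_k$ and therefore $\d_k(A) = \e_1(A) \cdots \e_k(A) = \a_k$ for all $k$; since $\d_n(A) = \a_n \neq \{0\}$, we have $\det A \neq 0$, i.e. $A \in I_n$. The same reasoning applied to $(\b_1, \ldots, \b_n)$ produces $B \in I_n$ with $\d_k(B) = \b_k$.

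Second, I would invoke Theorem \ref{texprodsuff} with this $A$ and $B$: its hypothesis is exactly that such matrices exist, so it delivers $\tilde A, \tilde B \in I_n$ with $\d_k(\tilde A) = \a_k$, $\d_k(\tilde B) = \b_k$ and $\d_k(\tilde A \tilde B) = \a_k \b_k$ for all $1 \le k \le n$. Since $\c_k = \a_k \b_k$ by assumption, the pair $(\tilde A, \tilde B)$ realises $((\a_1, \ldots, \a_n), (\b_1, \ldots, \b_n), (\c_1, \ldots, \c_n))$, which is the claim.

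For completeness I would note that conditions (1)--(3) for the list $(\c_1, \ldots, \c_n)$, included in the hypothesis only to mirror \ref{cexprodnec}, are in fact automatic once $\c_k = \a_k \b_k$: a product of principal ideals is principal, and the divisibilities for $(\c_k)$ follow factorwise from those for $(\a_k)$ and $(\b_k)$. I do not anticipate a genuine obstacle, since the substance is carried entirely by the two quoted theorems; the only place that needs a little care is the first step, namely checking that the ideals $\a_k \a_{k-1}^{-1}$ and $\b_k \b_{k-1}^{-1}$ are integral so that Theorem \ref{texsingle} is applicable, and observing that it is precisely the non-vanishing of $\a_n$ and $\b_n$ that upgrades the matrices produced by \ref{texsingle} from $\o^{n \times n}$ to $I_n$.
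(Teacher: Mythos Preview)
Your proposal is correct and follows exactly the route the paper intends: the paper gives no explicit proof for this corollary, merely the sentence ``The translation of the last theorem into the language of realisability yields the following corollary,'' and your argument is precisely that translation spelled out---use conditions (1)--(3) together with Theorem~\ref{texsingle} to produce $A,B\in I_n$ with the prescribed determinantal divisors, then invoke Theorem~\ref{texprodsuff}. The only point you leave implicit is that the $\a_k$ and $\b_k$ are nonzero ideals (needed for $A,B\in I_n$), which the paper likewise treats as understood from context.
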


To close this article, a characterisation of realisability in the special case that $\o$ is a principal ideal domain and $n = 2$ is investigated.

\begin{theorem}
Let $\o$ be a principle ideal domain. The triple $((\a_1, \a_2), (\b_1, \b_2), (\c_1, \c_2))$ is realisable, if and only if the following conditions are satisfied:
\begin{enumerate}[(1)]
\item $\a_1^2 \divides \a_2$ and $\b_1^2 \divides \b_2$ as well as $\c_1^2 \divides \c_2$ hold.
\item $\a_2 \b_2 = \c_2$.
\item $\a_1 \b_1 \divides \c_1 \divides \a_1 \b_1 (\smash{\a_1^{-2}} \a_2 + \smash{\b_1^{-2}} \b_2)$.
\end{enumerate}
\end{theorem}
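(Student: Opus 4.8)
The plan is to prove the two implications separately. For the ``only if'' direction I would invoke Corollary~\ref{cexprodnec}: a realisable triple satisfies all six of its conditions, and for $n = 2$ over a principal ideal domain these collapse to precisely (1)--(3). Indeed, condition (1) of \ref{cexprodnec} holds automatically since every ideal is principal, condition (3) is vacuous (there is no $k$ with $3 \le k \le 2$), condition (2) of \ref{cexprodnec} is condition (1) here, condition (4) is condition (2) here, and conditions (5) and (6) read $\a_1\b_1 \divides \c_1$ and $\c_1 \divides \a_1\b_1(\a_1^{-2}\a_2 + \b_1^{-2}\b_2)$, which together form condition (3) here. So nothing beyond \ref{cexprodnec} is needed for necessity.

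For the ``if'' direction I would argue by an explicit construction. Assume (1)--(3), and assume (as one may) that $\a_2,\b_2,\c_2$ are nonzero, since a realisable triple consists of matrices with nonzero determinant. Since $\o$ is a principal ideal domain, fix generators and write $\a_1 = s\o$ and $\a_2 = ss'\o$ with $s\divides s'$ --- possible because $\a_1^2\divides\a_2$ --- and likewise $\b_1 = r\o$, $\b_2 = rr'\o$ with $r\divides r'$, and $\c_1 = \gamma\o$. As $\a_1^{-2}\a_2 = (s'/s)\o$ and $\b_1^{-2}\b_2 = (r'/r)\o$, condition (3) says exactly that $sr\divides\gamma$ and that the element $\tau := \gamma(sr)^{-1}\in\o$ satisfies $\tau\divides s'/s$ and $\tau\divides r'/r$; in particular $s\tau\divides s'$, $r\tau\divides r'$, $sr\tau^2\divides s'r'$, and $r'/\tau\in r\o$.

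Then I would take
\[
\tilde A = \begin{pmatrix} s & 0 \\ 0 & s' \end{pmatrix}
\qquad\text{and}\qquad
\tilde B = \begin{pmatrix} r\tau & r\tau \\ r & r + r'/\tau \end{pmatrix},
\]
both of which lie in $I_2$ with $\det\tilde A = ss'$ and $\det\tilde B = rr'$, and whose product is $\tilde A\tilde B = (\begin{smallmatrix} sr\tau & sr\tau \\ s'r & s'r + s'r'/\tau \end{smallmatrix})$ with $\det(\tilde A\tilde B) = ss'rr'$. What remains is to read off the six determinantal divisors by greatest-common-divisor computations in $\o$: one has $\d_1(\tilde A) = \gcd(s,s')\o = s\o = \a_1$ and $\d_2(\tilde A) = ss'\o = \a_2$; using $r\divides r'/\tau$ one gets $\d_1(\tilde B) = r\o = \b_1$ and $\d_2(\tilde B) = rr'\o = \b_2$; and since $sr\tau$ divides both $s'r$ (because $\tau\divides s'/s$) and $s'r'/\tau$ (because $sr\tau^2\divides s'r'$), one gets $\d_1(\tilde A\tilde B) = sr\tau\,\o = \gamma\o = \c_1$, whereas $\d_2(\tilde A\tilde B) = ss'rr'\o = \a_2\b_2 = \c_2$ by condition (2). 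Hence the triple is realisable.

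I expect the genuinely nontrivial step to be the choice of $\tilde B$: the Smith-form diagonal matrix $\tilde A$ has to be paired with a non-diagonal companion built around the parameter $\tau$ so that the first determinantal divisor of the product acquires exactly the extra factor $\tau$ --- thereby sweeping out the whole admissible range between $\a_1\b_1$ and $\a_1\b_1(\a_1^{-2}\a_2 + \b_1^{-2}\b_2)$ allowed by (3) --- while the other five determinantal divisors stay put. Everything after that is bookkeeping; in particular the constraint $\c_1^2\divides\c_2$ from condition (1) is never used in this direction, being automatic from $\d_1(M)^2\divides\d_2(M)$ for any $M\in\o^{2\times2}$.
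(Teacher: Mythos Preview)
Your proof is correct and follows essentially the same approach as the paper: necessity via Corollary~\ref{cexprodnec}, sufficiency via an explicit $2\times 2$ construction built around the parameter $\tau = c_1(a_1 b_1)^{-1}$. The only cosmetic difference is that the paper keeps $B$ in diagonal (Smith) form and inserts the parameter into $A$, whereas you do the reverse; the gcd verifications are otherwise parallel, and your closing remark that $\c_1^2 \divides \c_2$ is unused in the sufficiency direction is likewise consistent with the paper's argument.
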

\begin{proof}
It can be derived from \ref{cexprodnec} that the given conditions are necessary for realisability, so it remains to show that (1)--(3) imply realisability. Let (1)--(3) be satisfied. Denote by $a_1, a_2, b_1, b_2, c_1, c_2 \in \o$ generators of $\a_1, \a_2, \b_1, \b_2, \c_1, \c_2$, respectively. Let $d = c_1 a_1^{-1} b_1^{-1}$. Then $d \in \o$ according to (3). Furthermore, let
\[
A = \begin{pmatrix}
a_1 d & a_1 \\
a_2 a_1^{-1} & 0 \\
\end{pmatrix} \qquad\text{and}\qquad
B = \begin{pmatrix}
b_1 & 0 \\
0 & b_2 b_1^{-1}
\end{pmatrix}.
\]
Then $\d_1(A) = a_1 \o$, since $a_1 \divides a_2 a_1^{-1}$ according to (1); analogously, $\d_1(B) = b_1 \o$ holds. Moreover, $\d_2(A) = a_2 \o$ and $\d_2(B) = b_2 \o$ as well as $\d_2(A B) = c_2 \o$ are satisfied (the latter following from (2)), so it remains to prove $\d_1(A B) = c_1 \o$. For the first determinantal divisor one obtains
\[
\d_1(A B) = \d_1\left(\begin{pmatrix}
a_1 d b_1 & a_1 b_2 b_1^{-1} \\
a_2 a_1^{-1} b_1 & 0 \\
\end{pmatrix}\right) = a_1 b_1 (d \o + b_2 b_1^{-2} \o + a_2 a_1^{-2} \o),
\]
and since (3) implies $d = c_1 a_1^{-1} b_1^{-1} \divides a_2 a_1^{-2} \o + b_2 b_1^{-2} \o$, the above stated equation yields $\d_1(A B) = a_1 b_1 d \o = c_1 \o$, which completes the proof.
\end{proof}

\bibliography{detdiv}

\end{document}